\newtheorem{theorem}{Theorem}
\newtheorem{remark}{Remark}
\newtheorem{example}{Example}
\newtheorem{definition}{Definition}
\begin{document}
%
\title{Input-to-State Stability of Time-Delay Systems with Delay-Dependent Impulses}
%
%
%

\author{Xinzhi~Liu~
        and~Kexue~Zhang
\thanks{X. Liu and K. Zhang are with the Department
of Applied Mathematics, University of Waterloo, Waterloo, Ontario N2L 3G1, Canada (e-mail: xinzhi.liu@uwaterloo.ca; kexue.zhang@uwaterloo.ca).}
}

\maketitle

\begin{abstract}
This paper studies input-to-state stability (ISS) of general nonlinear time-delay systems subject to delay-dependent impulse effects. Sufficient conditions for ISS are constructed by using the method of Lyapunov functionals. It is shown that, when the continuous dynamics are ISS but the discrete dynamics governing the delay-dependent impulses are not, the impulsive system as a whole is ISS provided the destabilizing impulses do not occur too frequently. On the contrary, when the discrete dynamics are ISS but the continuous dynamics are not, the delayed impulses must occur frequently enough to overcome the destabilizing effects of the continuous dynamics so that the ISS can be achieved for the impulsive system. Particularly, when the discrete dynamics are ISS and the continuous dynamics are also ISS or just stable for the zero input, the impulsive system is ISS for arbitrary impulse time sequences. { Compared with the existing results on impulsive time-delay systems, the obtained ISS criteria are more general in the sense that these results are applicable to systems with delay dependent impulses while the existing ones are not. Moreover, when consider time-delay systems with delay-free impulses, our result for systems with unstable continuous dynamics and stabilizing impulses is less conservative than the existing ones, as a weaker condition on the upper bound of impulsive intervals is obtained.} To demonstrate the theoretical results, we provide two examples with numerical simulations, in which distributed delays and discrete delays in the impulses are considered respectively.
\end{abstract}

\begin{IEEEkeywords}
Impulsive systems, time-delay systems, input-to-state stability, delay-dependent impulses.
\end{IEEEkeywords}

%
\IEEEpeerreviewmaketitle

\section{Introduction}
%
%
%
%
%
%
%
%

\IEEEPARstart{I}{mpulsive} system is a dynamical system (modeled by impulsive differential equations) that combines continuous-time evolution and abrupt state changes or resets at a sequence of times. Due to their wide applications in consensus of multi-agent systems \cite{ICM-AM-AG-AMG:2016}, synchronization of dynamical networks and applications to secure communication \cite{TY-LOC:1997}, infection disease control \cite{SG-LC-JJN-AT:2006}, spacecraft maneuvers \cite{TC:1991}, etc., extensive studies have been done on stability analysis and control problems of impulsive systems (see, e.g., \cite{BMM-EYR:2012,NP-AMS:1995,WMH-VSC-SGN:2006,STZ-ANS:1997}). { Since time-delay is unavoidable in sampling, processing, and transmission of the impulse information in the system, recent years have witnessed a rapid progress in stability analysis and control of dynamical systems with delay-dependent impulses (see, e.g., \cite{XL-KZ:2018,XL-KZ-WCX:2016,AK-XL-XS:2009,XL-XZ-SS:2017}). }

The notion of input-to-state stability (ISS), introduced in \cite{EDS:1989}, has been proved very useful in stability analysis and control of dynamical systems. This notion characterize the effects of external inputs to the stability of control systems. The ISS properties have been investigated for various types of control systems (see, e.g., \cite{ZPJ-YW:2001,LV-DC-DL:2007,DN-ART:2004,ART-LM-DN:2003,MA-ART:2012,ML-WPMHH-ART:2009,SD-AM:2013,NY-PP-MD:2008,ENS-JPP:1999}). In terms of dynamical systems with impulse effects, the ISS notion is extended to impulsive systems (without time-delay) in \cite{JPH-DL-ART:2008}, and then to impulsive systems with time-delay in the continuous time dynamics in \cite{WHC-WXZ:2009}. More recently, the work of \cite{JL-XL-WCX:2011} generalizes the ISS notion to impulsive time-delay systems with switchings, and sufficient conditions for ISS are obtained using the method of Lyapunov functionals. In \cite{XL-XZ-SS:2017}, the effect of delayed impulses on the ISS property of nonlinear delay-free systems is studied by the Lyapunov method. Up to now, numerous researchers have investigated the ISS properties of impulsive systems with time-delay (see, e.g., \cite{XL-XZ-SS:2017,JPH-DL-ART:2008,WHC-WXZ:2009,JL-XL-WCX:2011,XMS-WW:2012,SD-MK-AM-LN:2012,XW-YT-WZ:2016,YW-RW-JZ:2013,XZ-XL:2016}). However, to our best knowledge, time-delay in only considered in either the continuous dynamics of impulsive systems or just the impulses in the existing literature, and very few ISS results has been reported for systems with time-delay effects in both the continuous dynamics and the impulses. { Due to the ubiquity of time-delay, examples of such systems can be easily found in impulsive stabilization of neural networks \cite{XL-KZ-WCX:2016}, impulsive consensus of networked multi-agent systems \cite{XL-KZ-WCX:2019}, impulsive master-slave synchronization \cite{JS-QLH-XJ:2008}, etc.}

Motivated by the above discussion, we focus on nonlinear time-delay systems with delay-dependent impulses. The objective of this paper is to construct ISS criteria for such systems using the method of Lyapunov functionals. { Compared with the existing results, especially the representative results in \cite{XL-XZ-SS:2017,JPH-DL-ART:2008,WHC-WXZ:2009,JL-XL-WCX:2011,XMS-WW:2012}, the main contribution is that our ISS results are constructed for nonlinear systems with time-delay effects in both the continuous dynamics and the impulses, while the results in \cite{JPH-DL-ART:2008,WHC-WXZ:2009,JL-XL-WCX:2011,XMS-WW:2012} cannot be applied to systems with delay-dependent impulses and the ISS results in \cite{XL-XZ-SS:2017,JPH-DL-ART:2008} is not applicable to impulsive systems with time-delay in the continuous evolution.} Furthermore, our ISS result for systems with unstable continuous dynamics and ISS impulses is less conservative than the ones in \cite{JL-XL-WCX:2011,XMS-WW:2012} in the sense that our result can be applied to systems with a larger class of impulse time sequences.

The rest of the paper is organized as follows. Section \ref{Sec2} gives the basic notation and definitions, and formulate a general nonlinear impulsive systems with time-delay and external input. Sufficient conditions for ISS of the impulsive time-delay systems are constructed in Section \ref{Sec3} by using the method of Lyapunov functionals. Two numerical examples are provided in Section \ref{Sec4} to demonstrate the main results. The paper is concluded by Section \ref{Sec4}, where some future research directions are pointed out.

\section{Preliminaries}\label{Sec2}

Let $\mathbb{N}$ denote the set of positive integers, $\mathbb{R}$ the set of real numbers, $\mathbb{R}^+$ the set of nonnegative reals, and $\mathbb{R}^n$ the $n$-dimensional real space equipped with the Euclidean norm denoted by $\|\cdot\|$. For $a,b\in \mathbb{R}$ with $b>a$, denote $\mathcal{PC}([a,b],\mathbb{R}^n)$ the set of piecewise right continuous functions $\varphi:[a,b]\rightarrow\mathbb{R}^n$, and $\mathcal{PC}([a,\infty),\mathbb{R}^n)$ the set of functions $\phi:[a,\infty)\rightarrow\mathbb{R}^n$ satisfying $\phi|_{[a,b]}\in \mathcal{PC}([a,b],\mathbb{R}^n)$ for all $b>a$, where $\phi|_{[a,b]}$ is a restriction of $\phi$ on interval $[a,b]$. Given $r>0$, the linear space $\mathcal{PC}([-r,0],\mathbb{R}^n)$ is equipped with a norm defined by $\|\varphi\|_r:=\sup_{s\in[-r,0]}\|\varphi(s)\|$ for $\varphi\in \mathcal{PC}([-r,0],\mathbb{R}^n)$. For simplicity, we use $\mathcal{PC}$ to represent $\mathcal{PC}([-r,0],\mathbb{R}^n)$. Given $x\in \mathcal{PC}([-r,\infty),\mathbb{R}^n)$ and for each $t\in\mathbb{R}^+$, we define $x_t\in\mathcal{PC}$ as $x_t(s):=x(t+s)$ for $s\in [-r,0]$.

Consider the following nonlinear time-delay impulsive system:
\begin{eqnarray}\label{sys}
\left\{\begin{array}{ll}
\dot{x}(t)=f(t,x_t,w(t)), & t\not=t_k,~k\in\mathbb{N}\cr
\Delta x(t)=I_k(t,x_{t^-},w(t^-)), & t=t_k,~k\in\mathbb{N}\cr
x_{t_0}=\varphi,
\end{array}\right.
\end{eqnarray}
where $x(t)\in\mathbb{R}^n$ is the system state; $w\in \mathcal{PC}([t_0,\infty),\mathcal{R}^m)$ is the input function; $\varphi\in\mathcal{PC}$ is the initial function; $f,I_k:\mathbb{R}^+\times\mathcal{PC}\times\mathbb{R}^m\rightarrow\mathbb{R}^n$ satisfy $f(t,0,0)=I_k(t,0,0)=0$ for all $k\in\mathbb{N}$; $\{t_1,t_2,t_3,...\}$ is a strictly increasing sequence such that $t_k\rightarrow\infty$ as $t\rightarrow\infty$; $\Delta x(t):=x(t^+)-x(t^-)$ where $x(t^+)=\lim_{s\rightarrow t^+} x(s)$ and $x(t^-)=\lim_{s\rightarrow t^-} x(s)$ (similarly, $w(t^-)=\lim_{s\rightarrow t^-} w(s)$); $x_{t^-}$ is defined as $x_{t^-}(s)=x(t+s)$ if $s\in[-r,0)$ and $x_{t^-}(s)=x(t^-)$ if $s=0$. Given $w\in \mathcal{PC}([t_0,\infty),\mathcal{R}^m)$, define $g(t,\phi)=f(t,\phi,w(t))$ and assume $g$ satisfies all the necessary conditions in \cite{GB-XL:1999} so that, for any initial condition $\varphi\in\mathcal{PC}$, system \eqref{sys} has a unique solution $x(t,t_0,\varphi)$ that exists in a maximal interval $[t_0-r,t_0+\beta)$, where $0<\beta\leq \infty$.

Before giving the formal ISS definition for system \eqref{sys}, we introduce the following function classes. A continuous function $\alpha:\mathbb{R}^+\rightarrow\mathbb{R}$ is said to be of class $\mathcal{K}$ and we write $\alpha\in\mathcal{K}$, if $\alpha$ is strictly increasing and $\alpha(0)=0$. If $\alpha$ also unbounded, we say that $\alpha$ is of class $\mathcal{K}_{\infty}$ and we write $\alpha\in\mathcal{K}_{\infty}$. A function $\beta:\mathbb{R}^+\times\mathbb{R}^+ \rightarrow\mathbb{R}^+$ is said to be of class $\mathcal{KL}$ and we write $\beta\in \mathcal{KL}$, if $\beta(\cdot,t)\in\mathcal{K}$ for each $t\in\mathbb{R}^+$ and $\beta(s,t)$ decreases to $0$ as $t\rightarrow \infty$ for each $s\in \mathbb{R}^+$. Now we are in the position to state the ISS definition for system \eqref{sys}.

\begin{definition}
System \eqref{sys} is said to be uniformly input-to-state { stable} (ISS) over a certain class $\ell$ of admissible impulse time sequences, if there exist functions $\beta\in\mathcal{KL}$ and $\gamma\in\mathcal{K}_{\infty}$, independent of the choice of the sequences in $\ell$, such that, for each initial condition $\varphi\in\mathcal{PC}$ and input function $w\in\mathcal{PC}([t_0,\infty),\mathbb{R}^m)$, the corresponding solution to \eqref{sys} exists globally and satisfies
$$\|x(t)\|\leq \beta(\|\varphi\|_r,t-t_0)+\gamma\bigg(\sup_{s\in[t_0,t]}\|w(s)\|\bigg), \mathrm{~for~all~} t\geq t_0.$$
\end{definition}

To study the ISS properties of system \eqref{sys}, we next introduce two function classes related to the Lyapunov functional candidates. A function $v:\mathbb{R}^+\times\mathbb{R}^n\rightarrow \mathbb{R}^+$ is said to be of class $\nu_0$ and we write $v\in\nu_0$, if, for each $x\in\mathcal{PC}(\mathbb{R}^+,\mathbb{R}^n)$, the composite function $t\mapsto v(t,x(t))$ is also in $\mathcal{PC}(\mathbb{R}^+,\mathbb{R}^n)$ and can be discontinuous at some $t'\in\mathbb{R}^+$ only when $t'$ is a discontinuity point of $x$. A functional $v:\mathbb{R}^+\times \mathcal{PC}\rightarrow\mathbb{R}^n$ is said to be of class $\nu^*_0$ and we write $v\in \nu^*_0$, if, for each function $x\in\mathcal{PC}([-r,\infty),\mathbb{R}^n)$, { the composite function $t\mapsto v(t,x_t)$ is continuous in $t$ for all $t\geq 0$}. To analyze the continuous dynamics of system \eqref{sys}, we introduce the upper right-hand derivative of the Lyapunov functional candidate $V(t,x_t)$ with respect to system \eqref{sys}:
$$\mathrm{D}^+V(t,\phi)=\limsup_{h\rightarrow 0^+}\frac{1}{h}[V(t+h,x_{t+h}(t,\phi))-V(t,\phi)],$$
where $x(t,\phi)$ is a solution to \eqref{sys} satisfying $x_t=\phi$.
, that is, $x(s):=x(t,\phi)(s)$ is a solution to the initial value problem: $\left\{\begin{array}{ll}
\dot{x}(s)=f(s,x_s,w(s))\cr
x_{s_0}=\varphi
\end{array}\right.$ for $s\in[t,t+h)$ and $s_0=t$, where $h$ is a small enough positive number such that no impulse time lies in the interval $(t,t+h)$.

\section{Sufficient Conditions for ISS}\label{Sec3}

In this section, we establish several ISS results for system \eqref{sys} over the following three types of impulse time sequences, respectively: (i) $\ell_{\textrm{inf}}(\delta)$, the class of impulse time sequences satisfying $\inf_{k\in\mathbb{N}}\{t_k-t_{k-1}\}\geq \delta$; (ii) $\ell_{\textrm{sup}}(\delta)$, the class of impulse time sequences satisfying $\sup_{k\in\mathbb{N}}\{t_k-t_{k-1}\}\leq \delta$; (iii) $\ell_{\textrm{all}}$, the set containing all the possible impulse time sequences. We first introduce two results for ISS of system \eqref{sys} with stable continuous dynamics and unstable discrete dynamics.

\begin{theorem}
\label{th1}
Assume that there exist $V_1\in \nu_0$, $V_2\in \nu^*_0$, functions $\alpha_1,\alpha_2,\alpha_3,\chi\in\mathcal{K}_{\infty}$ and constants $\mu>0$, $\rho_1\geq 1$ and $\rho_2\geq 0$, such that, for all $t\in \mathbb{R}^+$, $x\in\mathbb{R}^n$, $y\in\mathbb{R}^m$ and $\phi\in \mathcal{PC}$, 
\begin{itemize}
\item[(i)] $\alpha_1(\|x\|)\leq V_1(t,x)\leq \alpha_2(\|x\|)$ and $0\leq V_2(t,\phi)\leq \alpha_3(\|\phi\|_{ r})$;

\item[(ii)] $\mathrm{D}^+ V(t,\phi)\leq -\mu V(t,\phi) + \chi(\|w(t)\|)$, where $V(t,\phi)=V_1(t,\phi(0))+V_2(t,\phi)$;

\item[(iii)] $V_1(t,\phi(0)+I_k(t,\phi,y))\leq \rho_1 V_1(t^-,\phi(0))+\rho_2 \sup_{s\in[-r,0]}\{V_1(t^-+s,\phi(s))\}+\chi(\|y\|)$;

\item[(iv)] $\ln \rho <\mu\delta$ where $\rho:=\rho_1+\rho_2 e^{\mu r}$.
\end{itemize}
Then system (\ref{sys}) is uniformly ISS over $\ell_{\textrm{inf}}(\delta)$.
\end{theorem}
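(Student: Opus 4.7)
The plan is to combine three classical building blocks for impulsive delay Lyapunov analysis: an exponential decay estimate on each impulse-free interval coming from the dissipation inequality (ii); a jump-type bound at each $t_k$ coming from (iii); and a discrete induction over $k$ that couples the two via the dwell-time condition $\ln\rho<\mu\delta$. Throughout I will work with the composite functional $V(t):=V_1(t,x(t))+V_2(t,x_t)$ along the solution of \eqref{sys}, which by (i) satisfies $\alpha_1(\|x(t)\|)\leq V(t)$ and $V(t_0)\leq \alpha_2(\|\varphi(0)\|)+\alpha_3(\|\varphi\|_r)$.

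First, on each $(t_{k-1},t_k)$ the standard comparison/Gronwall argument applied to (ii) gives
\begin{equation*}
V(t)\leq V(t_{k-1}^{+})\,e^{-\mu(t-t_{k-1})}+\int_{t_{k-1}}^{t}e^{-\mu(t-s)}\chi(\|w(s)\|)\,ds.
\end{equation*}
Second, at $t=t_k$, the continuity of $t\mapsto V_2(t,x_t)$ (guaranteed by $V_2\in\nu_0^{*}$) lets me add $V_2(t_k^{-},x_{t_k^{-}})=V_2(t_k,x_{t_k})$ to both sides of the bound from (iii); using $V_1,V_2\geq 0$ together with $\rho_1\geq 1$ then yields the jump inequality
\begin{equation*}
V(t_k^{+})\leq \rho_1\,V(t_k^{-})+\rho_2\sup_{s\in[-r,0]}V(t_k^{-}+s)+\chi(\|w(t_k^{-})\|).
\end{equation*}

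For the recursion I will rescale by $u(t):=e^{\mu(t-t_0)}V(t)$. On impulse-free intervals the continuous-time inequality becomes $u(t)\leq u(t_{k-1}^{+})+\int_{t_{k-1}}^{t}e^{\mu(s-t_0)}\chi(\|w(s)\|)\,ds$, so $u$ is non-decreasing modulo an input integral. Because $e^{\mu(t_k-t_0)}V(t_k+s)=e^{-\mu s}u(t_k+s)\leq e^{\mu r}u(t_k+s)$ for $s\in[-r,0]$, the jump inequality rescales to
\begin{equation*}
u(t_k^{+})\leq \rho\sup_{\sigma\in[t_0-r,\,t_k^{-}]}u(\sigma)+e^{\mu(t_k-t_0)}\chi(\|w(t_k^{-})\|),
\end{equation*}
with $\rho=\rho_1+\rho_2 e^{\mu r}$. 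A straightforward induction on $k$ then gives $u(t_k^{+})\leq \rho^{k}u(t_0)+(\text{input sum})$; combined with the lower bound $t_k-t_0\geq k\delta$ and the hypothesis $\ln\rho<\mu\delta$, one has $\rho^{k}e^{-\mu(t_k-t_0)}\leq e^{-(\mu\delta-\ln\rho)k}$, which decays exponentially. Dividing through by $e^{\mu(t-t_0)}$ recovers a bound of the form $V(t)\leq Ce^{-\lambda(t-t_0)}[\alpha_2(\|\varphi\|_r)+\alpha_3(\|\varphi\|_r)]+\tilde\gamma\bigl(\sup_{s\in[t_0,t]}\|w(s)\|\bigr)$ for some $\lambda>0$ and $\tilde\gamma\in\mathcal{K}_\infty$; finally condition (i) converts this into the ISS estimate $\|x(t)\|\leq \beta(\|\varphi\|_r,t-t_0)+\gamma\bigl(\sup_{s\in[t_0,t]}\|w(s)\|\bigr)$ required by Definition~1.

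The main obstacle, and the reason for the specific definition $\rho=\rho_1+\rho_2 e^{\mu r}$, is that the supremum $\sup_{s\in[-r,0]}V(t_k+s)$ appearing in the jump inequality can reach back across \emph{several} earlier impulses whenever $r>t_k-t_{k-1}$, so one cannot simply bound it by $V(t_{k-1}^{+})$ plus an input contribution. The exponential rescaling $u=e^{\mu(t-t_0)}V$ converts this ``backwards look'' over a window of length $r$ into the single multiplicative factor $e^{\mu r}$, which is exactly what makes the discrete recursion collapse to a one-step contraction driven by $\rho$ and then tamed by the dwell-time gap $\mu\delta-\ln\rho>0$.
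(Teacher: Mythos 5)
Your argument is correct, and it reaches the same two key estimates as the paper (the per-interval exponential decay from condition (ii), and the jump bound $V(t_k)\leq \rho_1 V(t_k^-)+\rho_2\sup_{s\in[-r,0]}V(t_k^-+s)+\chi(\cdot)$ obtained exactly as in \eqref{eqn-tm}, by adding the continuous $V_2$ and absorbing it via $\rho_1\geq 1$), but the bookkeeping is organized differently. The paper fixes the final decay rate $\lambda$ up front, builds the target bound $u(t)$ in \eqref{eqn-u} with an explicit constant $c$, and proves $v(t)e^{\lambda(t-t_0)}\leq u(t)$ by induction over impulse intervals; because $u(t)$ is not itself a solution of a comparison equation, the paper needs the contradiction/Dini-derivative argument on each open interval $(t_m,t_{m+1})$ and the auxiliary estimates \eqref{eqn-tk-} and \eqref{eqn-claim}. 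You instead rescale by the full rate, $u(t)=e^{\mu(t-t_0)}V(t)$, so that $u$ is nonincreasing up to input integrals between impulses, and the delayed supremum over a window of length $r$ costs only the factor $e^{\mu r}$, yielding the one-step recursion $S_{k}\leq\rho S_{k-1}+(\text{inputs})$ for the running supremum $S_k=\sup_{[t_0-r,t_k^-]}u$; the decay rate is then extracted a posteriori. Your route avoids the contradiction argument and the choice of $c$ entirely, at the price of deferring the rate computation; the paper's route produces the ISS estimate in one shot and generalizes more directly to Theorems \ref{th3} and \ref{th4}, where the same induction template is reused. Three small points you should make explicit to close the write-up: (a) $\rho^k e^{-\mu(t_k-t_0)}\leq e^{-(\mu\delta-\ln\rho)k}$ decays in $k$, not yet in $t-t_0$; to get a $\mathcal{KL}$ bound you must combine $k\leq (t_k-t_0)/\delta$ (valid since $\ln\rho\geq\ln\rho_1\geq 0$) with the inter-impulse factor $e^{-\mu(t-t_k)}$ to obtain the rate $\mu-\ln\rho/\delta>0$ in $t-t_0$ --- otherwise sparse impulse sequences in $\ell_{\textrm{inf}}(\delta)$ would defeat the estimate; (b) for early impulses the history supremum reaches back to $[t_0-r,t_0)$, where the sup in condition (iii) involves only $V_1$ and is bounded by $\alpha_2(\|\varphi\|_r)$ (the paper glosses over this too); (c) a closing sentence that boundedness of $V$ gives boundedness of $x$ and hence global existence of the solution, which Definition~1 requires.
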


\begin{proof}
Condition (iv) implies that there exists a small enough constant $\lambda>0$ so that $\ln\rho\leq(\mu-\lambda)\delta$. Then we can choose a large enough constant $c>0$ such that $c\rho e^{-\mu\delta} +\rho/\mu\leq c$ and $-\mu+1/c+\lambda\leq 0$. Let $x$ be a solution of \eqref{sys}, and set $v_1(t):=V_1(t,x(t))$, $v_2(t):=V_2(t,x_t)$, and $v(t):=v_1(t)+v_2(t)$. By mathematical induction, we shall prove that
\begin{align}\label{eqn-u}
v(t)e^{\lambda(t-t_0)} \leq & \alpha(\|\varphi\|_r) +c e^{\lambda(t-t_0)}\bar{\chi}(t)\cr
                            & +\sum_{t_0<t_k\leq t} e^{\lambda(t_k-t_0)} \chi(\|w(t^-_k)\|),
\end{align}
where $\alpha(\|\varphi\|_r):=\alpha_2(\|\varphi(0)\|)+\alpha_3(\|\varphi\|_{ r})$ and $\bar{\chi}(t)=\chi(\sup_{s\in[t_0,t]}\|w(s)\|)$. For convenience, denote the RHS of \eqref{eqn-u} as $u(t)$.

For $k\geq 1$, condition (ii) on $[t_k,t_{k+1})$ implies $e^{\mu t}v(t)-e^{\mu t_k} v(t_k) \leq \int^t_{t_k} e^{\mu s} \chi(\|w(s)\|) \mathrm{d}s$, which gives
\begin{equation}\label{eqn-v}
v(t)\leq e^{-\mu(t-t_k)} v(t_k) +\frac{1}{\mu} \bar{\chi}(t),
\end{equation}
for $t\in [t_k,t_{k+1})$ and $k\in \mathbb{Z}^+$. For $t\in [t_0,t_1)$, multiplying both sides of \eqref{eqn-v} with $e^{\lambda(t-t_0)}$ and using condition (i) and the fact that $1/\mu<c$, we conclude that $v(t)e^{\lambda(t-t_0)}\leq \alpha(\|\varphi\|) + c e^{\lambda(t-t_0)} \bar{\chi}(t)$ which shows that \eqref{eqn-u} holds for $t\in [t_0,t_1)$.

Now suppose that \eqref{eqn-u} holds for $t\in [t_0,t_m)$ where $m\geq 1$. We shall prove \eqref{eqn-u} is true on $[t_m,t_{m+1})$. To do this, we firstly conduct the following estimation:
\begin{align}\label{eqn-tk-}
      &\rho v(t^-_m) e^{\lambda(t_m-t_0)}\cr
\leq  &\rho e^{-(\mu-\lambda)(t_m-t_{m-1})} \alpha(\|\varphi\|)\cr
      &+ [c \rho e^{-\mu(t_m-t_{m-1})} + \frac{\rho}{\mu}] e^{\lambda(t_m-t_0)} \bar{\chi}(t^-_{m})\cr
      &+ \rho e^{-(\mu-\lambda)(t_m-t_{m-1})}  \sum_{t_0<t_k\leq t_{m-1}} e^{\lambda(t_k-t_0)}\chi(\|w(t^-_k)\|)\cr
\leq  &u(t^-_m).
\end{align}
For the first inequality of \eqref{eqn-tk-}, we used \eqref{eqn-v} and then \eqref{eqn-u}. For the second inequality of \eqref{eqn-tk-}, we used the facts that $t_m-t_{m-1}\geq \delta$, $\rho e^{-(\mu-\lambda)\delta}\leq 1$ and $c\rho e^{-\mu\delta} +\rho/\mu\leq c$. Next, we will show that \eqref{eqn-u} is true for $t=t_m$. We start with making the claim that for $s\in[-r,0]$, we have
\begin{equation}\label{eqn-claim}
\rho v(t^-_m+s)e^{\lambda(t_m+s-t_0)}\leq e^{(\mu-\lambda)r} u(t^-_m).
\end{equation}
Without loss of generality, suppose $t_m+s\geq t_0$ for all $s\in[-r,0]$, then, for a given $s\in[-r,0]$, there exists an integer $j$ ($0\leq j\leq m-1$) such that $t_m+s\in [t_j,t_{j+1})$ and
\begin{align}
& \rho v(t^-_m+s) e^{\lambda(t_m+s-t_0)}\cr
\leq & \rho e^{-(\mu-\lambda)(t_m+s-t_j)} u(t_j) + \frac{\rho}{\mu} e^{\lambda(t_m+s-t_0)} \bar{\chi}(t^-_m+s)\cr
\leq & e^{(\mu-\lambda)r}\rho e^{-(\mu-\lambda)(t_{j+1}-t_j)} u(t_j) + \frac{\rho}{\mu} e^{\lambda(t_m+s-t_0)} \bar{\chi}(t^-_m+s)\cr
\leq & e^{(\mu-\lambda)r} u(t^-_m),
\end{align}
which implies \eqref{eqn-claim} is true for all $s\in[-r,0]$. Combining this with condition (iii) and the fact that $\rho_1\geq 1$, we conclude that
\begin{align}\label{eqn-tm}
& v(t_m) e^{\lambda(t_m-t_0)}\cr
\leq & [\rho_1 v_1(t^-_m) + \rho_2 \sup_{-r\leq s\leq 0}\{v_1(t^-_m+s)\} + \chi(\|w(t^-_m)\|) \cr
     & + v_2(t^-_m)] e^{\lambda(t_m-t_0)}\cr
\leq & [\rho_1 v(t^-_m) + \rho_2 \sup_{-r\leq s\leq 0}\{v(t^-_m+s)\} + \chi(\|w(t^-_m)\|)] \cr
     & \times e^{\lambda(t_m-t_0)}\cr
\leq & \rho_1 v(t^-_m) e^{\lambda(t_m-t_0)} + \rho_2 e^{\lambda r} \sup_{-r\leq s\leq 0}\{v(t^-_m+s) e^{\lambda(t_m+s-t_0)}\} \cr
     & + \chi(\|w(t^-_m)\|) e^{\lambda(t_m-t_0)}\cr
\leq & \frac{\rho_1+\rho_2 e^{\mu r}}{\rho} u(t^-_m) + \chi(\|w(t^-_m)\|) e^{\lambda(t_m-t_0)}\cr
  =  & u(t_m),
\end{align}
which implies \eqref{eqn-u} holds for $t=t_m$. We now show that \eqref{eqn-u} is true on $(t_m,t_{m+1})$ by contradiction. Suppose that there exists a $t\in(t_m,t_{ m+1})$ such that $v(t)e^{\lambda(t-t_0)}>u(t)$, and then define $t^*=\inf\{t\in(t_m,t_{ m+1})\mid v(t)e^{\lambda(t-t_0)}>u(t)\}$. Combining \eqref{eqn-tm} and the continuities of $v(t)$ and $u(t)$ on $(t_m,t_{m+1})$, we conclude that $v(t^*)e^{\lambda(t^*-t_0)}=u(t^*)$, which, by the definition of $u(t)$, implies $v(t^*) >c \bar{\chi}(t^*)$. In view of this, condition (ii) and the fact that $-\mu +\frac{1}{c}+\lambda \leq 0$, it follows that
\begin{align}\label{eqn-dini}
& \mathrm{D}^+[v(t^*)e^{\lambda(t^*-t_0)}]\cr
\leq & [-\mu v(t^*) +\chi(\|w(t^*)\|)+\lambda v(t^*)]e^{\lambda(t^*-t_0)}\cr
  <  & (-\mu +\frac{1}{c}+\lambda) v(t^*) e^{\lambda(t^*-t_0)}\leq 0,
\end{align}
which means $v(t)e^{\lambda(t-t_0)}$ is strictly decreasing at $t=t^*$. This contradicts how $t^*$ is defined. Hence, \eqref{eqn-u} holds on $(t_m,t_{m+1})$. By induction, we conclude that \eqref{eqn-u} is true for all $t\geq t_0$. The ISS estimation can be conducted from \eqref{eqn-u} by standard arguments. The details are the same as that in \cite{JL-XL-WCX:2011} and thus omitted. Boundedness of the solution to \eqref{sys} follows from this estimate, which then implies the solution's global existence (see \cite{GB-XL:1999}).
\end{proof}

{ 
\begin{remark}\label{remark-th1}
Compared with Theorem 3.1 in \cite{JL-XL-WCX:2011} for system \eqref{sys}, the main difference is that the time-delay is considered in the impulses in our result. To be more specific, at the impulse time $t=t_k$, state $x(t^-_k)$ depends on $x_{t^-_k}$ which characterizes the states at some history moments. Therefore, in condition (iii) of Theorem \ref{th1}, the impulse effects on $V_1$ are described by a delay-independent part and a delay-dependent part which are associated with parameters $\rho_1$ and $\rho_2$, respectively. It is worth noting that $\rho_1\geq 1$ implies the delay-independent impulse part destabilizes the system. On the other hand, if the state jumps are independent of the time-delay (e.g., $\Delta x(t_k)=I_k(t_k,x(t_k^-),w(t_k^-))$), then $V_1$ depends only on state $x(t^-_k)$ (that is, $V_1$ is independent of the delayed states, which implies $\rho_2=0$) and Theorem \ref{th1} reduces to  Theorem 3.1 in \cite{JL-XL-WCX:2011} for system \eqref{sys}.

\end{remark}
}

The second result is concerned with system \eqref{sys} in the case when the impulses are potentially destabilizing but the delay-independent impulse parts are stabilizing (i.e., $\rho_1<1$).

\begin{theorem}
\label{th2}
Assume that there exist $V_1\in \nu_0$, $V_2\in \nu^*_0$, functions $\alpha_1,\alpha_2,\alpha_3,\chi\in\mathcal{K}_{\infty}$ and constants $\mu>0$, $1>\rho_1\geq 0$ and $\rho_2\geq 0$, such that, for all $t\in \mathbb{R}^+$, $x\in\mathbb{R}^n$, $y\in\mathbb{R}^m$ and $\phi\in \mathcal{PC}$, conditions (i), (ii) and (iii) of Theorem \ref{th1} are satisfied, and there exists a positive constant $\kappa$ such that $V_2(t,\phi)\leq \kappa \sup_{s\in[-r,0]}\{V_1(t+s,\phi(s))\}$. If we further assume that condition (iv) of Theorem \ref{th1} holds with $\rho:=\rho_1+[\rho_2+(1-\rho_1)\kappa] e^{\mu r}$ and $\rho_1+\rho_2+(1-\rho_1)\kappa\geq 1$, then system (\ref{sys}) is uniformly ISS over $\ell_{\textrm{inf}}(\delta)$.
\end{theorem}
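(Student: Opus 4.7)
The plan is to mirror the proof of Theorem~\ref{th1} almost verbatim, with the sole essential modification appearing at the jump estimate, which must now be re-derived to accommodate the regime $\rho_1 < 1$. As before, condition (iv) with the new $\rho = \rho_1 + [\rho_2 + (1-\rho_1)\kappa] e^{\mu r}$ yields constants $\lambda, c > 0$ satisfying $c\rho e^{-\mu\delta} + \rho/\mu \leq c$ and $-\mu + 1/c + \lambda \leq 0$, and I would define $v_1, v_2, v$ and the target bound $u(t)$ exactly as in Theorem~\ref{th1}. Integrating condition (ii) across each inter-impulse interval produces the same inequality \eqref{eqn-v}, and the base case on $[t_0, t_1)$ is handled identically.

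The heart of the modification is the jump estimate at $t = t_m$. Since $V_2 \in \nu_0^*$ makes $v_2$ continuous, we have $v(t_m) = v_1(t_m) + v_2(t_m^-)$. Applying condition (iii) of Theorem~\ref{th1} and then substituting $v_1(t_m^-) = v(t_m^-) - v_2(t_m^-)$, I would obtain
\begin{align*}
v(t_m) &\leq \rho_1 v_1(t_m^-) + \rho_2 \sup_{s\in[-r,0]} v_1(t_m^- + s) + \chi(\|w(t_m^-)\|) + v_2(t_m^-) \\
       &= \rho_1 v(t_m^-) + (1-\rho_1) v_2(t_m^-) + \rho_2 \sup_{s\in[-r,0]} v_1(t_m^- + s) + \chi(\|w(t_m^-)\|).
\end{align*}
Invoking the added hypothesis $v_2(t_m^-) \leq \kappa \sup_{s\in[-r,0]} v_1(t_m^- + s)$ and then $v_1 \leq v$ (which holds since $v_2 \geq 0$) leads to
\begin{equation*}
v(t_m) \leq \rho_1 v(t_m^-) + [\rho_2 + (1-\rho_1)\kappa] \sup_{s\in[-r,0]} v(t_m^- + s) + \chi(\|w(t_m^-)\|),
\end{equation*}
which is precisely the jump inequality driving the proof of Theorem~\ref{th1}, but with $\rho_2$ replaced by $\tilde{\rho}_2 := \rho_2 + (1-\rho_1)\kappa$.

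With this single replacement the effective contraction constant becomes exactly $\rho = \rho_1 + \tilde{\rho}_2 e^{\mu r}$, matching the statement. The hypothesis $\rho_1 + \rho_2 + (1-\rho_1)\kappa \geq 1$ furnishes $\rho \geq 1$ and plays in the new proof the same structural role that $\rho_1 \geq 1$ did in Theorem~\ref{th1}: it guarantees the recursive coefficient at the impulse is at least one, so the induction closes (in particular the analogue of \eqref{eqn-tm} remains valid). The remaining ingredients---the estimate of $\rho v(t_m^-) e^{\lambda(t_m - t_0)}$, the claim \eqref{eqn-claim} on $[-r, 0]$, and the contradiction argument on $(t_m, t_{m+1})$---then transfer without change, and the ISS estimate together with global existence follow from the standard arguments cited at the end of the proof of Theorem~\ref{th1}.

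The main obstacle I anticipate is essentially bookkeeping: verifying that every inequality in Theorem~\ref{th1} that formerly relied on $\rho_1 \geq 1$ still goes through under the weaker $\rho \geq 1$, and checking that the admissibility constraints on $\lambda$ and $c$ accommodate the enlarged $\tilde{\rho}_2$. I do not foresee any genuinely new difficulty, because the additional condition $V_2 \leq \kappa \sup V_1$ is engineered precisely to compensate for the loss of $\rho_1 \geq 1$ by absorbing the residual $V_2(t_m^-)$ term into the delay-dependent coefficient, thereby restoring the exact inductive structure of Theorem~\ref{th1}.
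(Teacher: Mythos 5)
Your proposal is correct and follows essentially the same route as the paper: the paper's proof of Theorem~\ref{th2} consists precisely of rerunning the induction of Theorem~\ref{th1} with the jump estimate at $t_m$ replaced by $v(t_m)\leq \rho_1 v(t_m^-)+[\rho_2+(1-\rho_1)\kappa]\sup_{s\in[-r,0]}v(t_m^-+s)+\chi(\|w(t_m^-)\|)$, obtained by splitting $v_2(t_m^-)=(\rho_1+1-\rho_1)v_2(t_m^-)$ and absorbing the $(1-\rho_1)v_2$ term via the $\kappa$-comparison — algebraically identical to your substitution $v_1(t_m^-)=v(t_m^-)-v_2(t_m^-)$. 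The remaining steps carry over with $\rho_2$ replaced by $\rho_2+(1-\rho_1)\kappa$, exactly as you describe.
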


\begin{proof}
The proof is essentially identical to that of Theorem \ref{th1}. The main difference is to replaced the following estimate of $v(t_m)$ in \eqref{eqn-tm}:
\begin{align}
 & v(t_m) \cr
 \leq & \rho_1 v_1(t^-_m) + \rho_2 \sup_{-r\leq s\leq 0}\{v_1(t^-_m+s)\} + \chi(\|w(t^-_m)\|) \cr
     & + (\rho_1 +1-\rho_1)v_2(t^-_m)\cr
\leq & \rho_1 v(t^-_m) + [\rho_2+(1-\rho_1)\kappa] \sup_{-r\leq s\leq 0}\{v(t^-_m+s)\} \cr
     & + \chi(\|w(t^-_m)\|).\nonumber
\end{align}
\end{proof}

We next introduce an ISS result for system \eqref{sys} for the case when the impulses are stabilizing but the continuous dynamics can be unstable.

\begin{theorem}
\label{th3}
Assume that there exist $V_1\in \nu_0$, $V_2\in \nu^*_0$, functions $\alpha_1,\alpha_2,\alpha_3,\chi\in\mathcal{K}_{\infty}$ and constants $\mu>0$, $\kappa>0$, $1>\rho_1\geq 0$ and $\rho_2\geq 0$, such that, for all $t\in \mathbb{R}^+$, $x\in\mathbb{R}^n$, and $\phi\in \mathcal{PC}$, 
\begin{itemize}
\item[(i)] $\alpha_1(\|x\|)\leq V_1(t,x)\leq \alpha_2(\|x\|)$ and $0\leq V_2(t,\phi)\leq \alpha_3(\|\phi\|_{ r})$;

\item[(ii)] $\mathrm{D}^+ V(t,\phi)\leq \mu V(t,\phi) + \chi(\|w(t)\|)$, where $V(t,\phi)=V_1(t,\phi(0))+V_2(t,\phi)$;

\item[(iii)] $V_1(t,\phi(0)+I_k(t,\phi,w(t^-)))\leq \rho_1 V_1(t^-,\phi(0))+\rho_2 \sup_{s\in[-r,0]}\{V_1(t^-+s,\phi(s))\}+ \chi(\sup_{s\in[-r,0]}\|w(t^-+s)\|)$;

\item[(iv)] $V_2(t,\phi)\leq \kappa \sup_{s\in[-r,0]}\{V_1(t+s,\phi(s))\}$;

\item[(v)] $\ln[\rho_1+\rho_2+(1-\rho_1)\kappa]  < -\mu \delta$.
\end{itemize}
Then system (\ref{sys}) is uniformly ISS over $\ell_{\textrm{sup}}(\delta)$.
\end{theorem}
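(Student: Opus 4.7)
The plan is to follow the same inductive blueprint as in the proofs of Theorems~\ref{th1} and~\ref{th2}, but to arrange the estimates so that the contraction provided by the stabilizing impulses (condition~(v)) dominates the growth $e^{\mu(t_k-t_{k-1})}$ produced by the potentially unstable continuous dynamics. Writing $v_1(t)=V_1(t,x(t))$, $v_2(t)=V_2(t,x_t)$, and $v(t)=v_1(t)+v_2(t)$, my first step is to derive a unified jump inequality. Since $V_2\in\nu^*_0$ makes $t\mapsto v_2(t)$ continuous, we have $v_2(t_k)=v_2(t_k^-)$; writing $v_2(t_k^-)=\rho_1 v_2(t_k^-)+(1-\rho_1)v_2(t_k^-)$ and bounding the second summand by (iv), together with (iii), yields
\begin{equation*}
v(t_k)\leq \tilde\rho \sup_{s\in[-r,0]} v(t_k^-+s) + \chi\left(\sup_{s\in[-r,0]}\|w(t_k^-+s)\|\right),
\end{equation*}
with $\tilde\rho:=\rho_1+\rho_2+(1-\rho_1)\kappa$. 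This is the counterpart for $v$ of the splitting identity used in the proof of Theorem~\ref{th2}.

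Next, I would integrate condition~(ii) on each $[t_k,t_{k+1})$ to obtain $v(t)\leq e^{\mu(t-t_k)} v(t_k)+\frac{1}{\mu}(e^{\mu(t-t_k)}-1)\bar\chi(t)$, so $v$ is amplified by at most $e^{\mu\delta}$ across any inter-impulse interval. Since condition~(v) is precisely $\tilde\rho\,e^{\mu\delta}<1$, I can pick $\lambda>0$ small enough that $\tilde\rho\,e^{(\mu+\lambda)\delta+\lambda r}\leq 1$, together with a constant $c>0$ large enough to absorb the driving term $(e^{\mu\delta}-1)/\mu$. Setting $\alpha(\|\varphi\|_r):=\alpha_2(\|\varphi(0)\|)+\alpha_3(\|\varphi\|_r)$, I would then introduce a decaying target of the form $u(t)=e^{-\lambda(t-t_0)}\alpha(\|\varphi\|_r)+c\,\bar\chi(t)$ (augmented if necessary by a discrete summation over impulse-time inputs analogous to the last term of \eqref{eqn-u}) and prove by induction on $k$ that $v(t_k)\leq u(t_k)$ at impulse times, with $v(t)\leq e^{(\mu+\lambda)\delta} u(t)$ on each open inter-impulse interval. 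The step at $t=t_{k+1}$ combines the unified jump inequality with the inductive bound applied to each $v(t_{k+1}^-+s)$, after locating the inter-impulse interval containing $t_{k+1}+s$; the resulting accumulated factor is exactly $\tilde\rho\,e^{(\mu+\lambda)\delta+\lambda r}\leq 1$ by the choice of $\lambda$, closing the induction. A by-contradiction argument analogous to the one following \eqref{eqn-dini} takes care of $v(t)$ on the open intervals $(t_k,t_{k+1})$, and the resulting bound together with (i) delivers the ISS estimate and global existence by the same standard argument cited at the end of the proof of Theorem~\ref{th1}.

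The hard part will be controlling $\sup_{s\in[-r,0]} v(t_k^-+s)$ at the impulse step: the delay window at $t_k^-$ may span several preceding inter-impulse intervals (especially when $r>\delta$), so each $v(t_k^-+s)$ must be dominated by the envelope $u$ evaluated at a potentially much older impulse time, with the accumulated continuous growth $e^{(\mu+\lambda)\delta}$ and the exponential weighting $e^{\lambda r}$ both entering the estimate. The penalty factor $e^{(\mu+\lambda)\delta+\lambda r}$ is exactly what captures these two contributions, and the strictness of condition~(v) is precisely the slack that allows $\lambda>0$ to be chosen so that this penalty is absorbed by the impulse contraction $\tilde\rho$, making the staircase envelope self-consistent.
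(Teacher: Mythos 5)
Your proposal follows essentially the same route as the paper's proof: the unified jump inequality obtained by splitting $v_2(t_k^-)=\rho_1 v_2(t_k^-)+(1-\rho_1)v_2(t_k^-)$ and invoking (iii)--(iv) is exactly the first two lines of the paper's estimate at $t=t_m$ (the paper keeps $\rho_1 v(t_m^-)$ separate so that only the $[\rho_2+(1-\rho_1)\kappa]$ part picks up the factor $e^{\lambda r}$, whereas you lump everything under $\tilde\rho\sup_{s}v(t_k^-+s)$; your version is coarser but still closes because condition (v) is strict), the integrated Gr\"onwall bound on $[t_k,t_{k+1})$ is the paper's inequality \eqref{eqn-v2}, and your staircase envelope with inflation factor $e^{(\mu+\lambda)\delta}$ on open intervals is the same bookkeeping as the constant $M=e^{(\mu+\lambda)\delta}$ in \eqref{eqn-u2}. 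The concern you raise about the delay window spanning several inter-impulse intervals is handled automatically once the inductive hypothesis is stated uniformly on $[t_0,t_m)$ with a nondecreasing envelope, since then $\sup_{s\in[-r,0]}v(t_m^-+s)e^{\lambda(t_m+s-t_0)}\leq e^{\lambda r}\,\sup_{s}v(t_m^-+s)e^{\lambda(t_m+s-t_0)}\cdot e^{-\lambda r}\leq u(t_m^-)\,e^{\lambda r}\cdot e^{-\lambda r}$-type estimates go through without chasing back through individual intervals; no separate argument is needed.

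The one step that would fail if taken literally is your closing claim that ``a by-contradiction argument analogous to the one following \eqref{eqn-dini} takes care of $v(t)$ on the open intervals $(t_k,t_{k+1})$.'' That argument in Theorem~\ref{th1} hinges on the dissipativity $\mathrm{D}^+V\leq-\mu V+\chi$ together with $-\mu+1/c+\lambda\leq 0$, so that at the first crossing time $t^*$ the function $v(t)e^{\lambda(t-t_0)}$ is strictly decreasing. Here the drift is expansive, $\mathrm{D}^+V\leq+\mu V+\chi$, and no choice of $c$ makes $\mu+1/c+\lambda\leq 0$, so the contradiction at $t^*$ cannot be produced. Fortunately this step is also unnecessary: the integrated bound $v(t)\leq e^{\mu(t-t_k)}v(t_k)+\frac{e^{\mu\delta}}{\mu}\bar\chi(t)$, which you have already derived, propagates the impulse-time bound $v(t_k)\leq u(t_k)$ directly across each open interval at the cost of the factor $e^{(\mu+\lambda)\delta}$ that your envelope already budgets for --- and this direct propagation is precisely what the paper does in \eqref{eqn-vt}--\eqref{eqn-vt2}. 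Replace the contradiction remark with that direct estimate and your proof is complete and matches the paper's.
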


\begin{proof}
We conclude from condition (v) that there exists a small enough constant $\lambda>0$ so that $\ln(\rho_1+[\rho_2+(1-\rho_1)\kappa] e^{\lambda r}) \leq -(\mu+\lambda)\delta$. Denote $\rho:=\rho_1+[\rho_2+(1-\rho_1)\kappa] e^{\lambda r}$, then we have $\rho e^{(\mu+\lambda)\delta}\leq 1$, which implies $\rho e^{\mu \delta}< 1$. This further implies that there exists a large enough constant $c>0$ such that $c \rho e^{\mu\delta} +{e^{\mu\delta}}/{\mu} \leq c$. Let $M:=e^{(\mu+\lambda)\delta}$, and we shall show that 
\begin{align}\label{eqn-u2}
v(t)e^{\lambda(t-t_0)} \leq & M \alpha(\|\varphi\|_r) +c e^{\lambda(t-t_0)}\bar{\chi}(t)\cr
                            & + M \sum_{t_0<t_k\leq t} e^{\lambda(t_k-t_0)} \hat{\chi}(t^-_k),
\end{align}
where $\alpha$ and $\bar{\chi}$ are the same as in the proof of Theorem \ref{th1}, and $\hat{\chi}(t^-_k):=\chi(\sup_{s\in[-r,0]}\|w(t^-_k+s)\|)$. For convenience, let $u(t)$ represent the RHS of \eqref{eqn-u2}. Similar to the estimate of \eqref{eqn-v}, we conclude from condition (ii) that 
\begin{equation}\label{eqn-v2}
v(t) \leq e^{\mu(t-t_k)} v(t_k) +\frac{e^{\mu\delta}}{\mu} \bar{\chi}(t),
\end{equation}
for $t\in [t_k,t_{k+1})$ and $k\geq 1$. Then, on $[t_0,t_1)$, we have
\begin{align}\label{eqn-k=1}
& v(t)e^{\lambda(t-t_0)}\cr
\leq & e^{(\mu+\lambda)(t-t_0)} v(t_0) + \frac{e^{\mu\delta}}{\mu} e^{\lambda(t-t_0)} \bar{\chi}(t)\cr
\leq & M \alpha(\|\varphi\|) + c e^{\lambda(t-t_0)} \bar{\chi}(t),
\end{align}
which means \eqref{eqn-u2} holds on $[t_0,t_1)$. Here, we used \eqref{eqn-v2} in the first inequality of \eqref{eqn-k=1} and the fact that ${e^{\mu\delta}}/{\mu} \leq c$ in the second inequality. Now suppose \eqref{eqn-u2} is true for $t\in [t_0,t_m)$ with $m\geq 1$. We shall prove that \eqref{eqn-u2} holds on $[t_m,t_{m+1})$. To do this, we start with proving that \eqref{eqn-u2} is true for $t=t_m$:
\begin{align}\label{eqn-tm-2}
& v(t_m)e^{\lambda(t_m-t_0)} \cr
\leq & \big\{\rho_1 v(t^-_m) + [\rho_2+(1-\rho_1)\kappa]\sup_{s\in[-r,0]}\{v_1(t^-_m+s)\} \cr
     & + \hat{\chi}(t^-_m) \big\}e^{\lambda(t_m-t_0)} \cr
\leq & \rho_1 u(t^-_m) + e^{\lambda(t_m-t_0)}\hat{\chi}(t^-_m)\cr
     & + [\rho_2+(1-\rho_1)\kappa] e^{\lambda r}\sup_{s\in[-r,0]}\{v(t^-_m+s) e^{\lambda(t_m+s-t_0)}\}\cr
\leq & \rho u(t^-_m) + e^{\lambda(t_m-t_0)}\hat{\chi}(t^-_m)\cr
\leq & u(t_m).
\end{align}
Here, we used conditions (iii) and (iv) in the first inequality of \eqref{eqn-tm-2}, and \eqref{eqn-u2} with the fact that $u(t^-_m+s)\leq u(t^-_m)$ for all $s\in[-r,0]$ in the estimate of the third inequality. For $t\in(t_m,t_{m+1})$, we conclude from \eqref{eqn-v2} and the third inequality of \eqref{eqn-tm-2} that
\begin{align}\label{eqn-vt}
& v(t)e^{\lambda(t-t_0)}\cr
\leq & \rho e^{(\mu+\lambda)(t-t_m)} u(t^-_m) + e^{(\mu+\lambda)(t-t_m)} e^{\lambda(t_m-t_0)} {\chi}(\|w(t^-_m)\|)\cr
     & + \frac{e^{\mu\delta}}{\mu}e^{\lambda(t-t_0)} \hat{\chi}(t^-_m).
\end{align}
In view of \eqref{eqn-u2}, we then get from \eqref{eqn-vt} that
\begin{align}\label{eqn-vt2}
& v(t)e^{\lambda(t-t_0)}\cr
\leq & \rho e^{(\mu+\lambda)\delta} M \alpha(\|\varphi\|) + \big( c \rho e^{\mu\delta} +\frac{e^{\mu\delta}}{\mu} \big) e^{\lambda(t-t_0)} \bar{\chi}(t) \cr
     & + \rho e^{(\mu+\lambda)\delta} M \sum_{t_o<t_k<t_m} e^{\lambda(t_k-t_0)} \hat{\chi}(\|w(t^-_k)\|)\cr
     & + e^{(\mu+\lambda)\delta} M e^{\lambda(t_m-t_0)} \hat{\chi}(\|w(t^-_m)\|)\cr
\leq & u(t), 
\end{align}
i.e., \eqref{eqn-u2} hold on $(t_m,t_{m+1})$. By the method of induction, we conclude that \eqref{eqn-u2} is true for all $t\geq t_0$. The ISS estimate from \eqref{eqn-u2} is similar to that in the proof of Theorem \ref{th1}, and global existence of the solution to \eqref{sys} follows from this estimate.
\end{proof}

\begin{remark}\label{remark-th3}
Theorem \ref{th3} provides sufficient conditions for ISS of system \eqref{sys} with delay-dependent impulses. However, ISS analysis of this type of systems cannot be conducted based on the existing results in \cite{WHC-WXZ:2009,JL-XL-WCX:2011,XMS-WW:2012} due to the existence of time-delay in the impulses. Furthermore, for system \eqref{sys} with delay-free impulses, our result with $\rho_2=0$ is less conservative in the sense that the upper bound of $\delta$ is required to be smaller than $\frac{1}{\mu} \ln(\frac{1}{\rho_1+(1-\rho_1)\kappa})$, while Theorem 1 in \cite{XMS-WW:2012} requires $\delta<\frac{1}{\mu} \ln(\frac{1}{\rho_1+\kappa})$.
\end{remark}

\begin{remark}\label{remark-th3-1}
Compared with Condition (iii) of Theorem \ref{th1}, Condition (iii) of Theorem \ref{th3} is a weaker requirement on the Lyapunov function at each impulse moment. Therefore, the conclusion of Theorem \ref{th3} still holds if we replace this condition with condition (iii) of Theorem \ref{th1}, that is, replace $\chi(\sup_{s\in[-r,0]}\|w(t^-+s)\|)$ with $\chi(\|w(t^-)\|)$ in the condition. However, Theorem \ref{th3} is more applicable to systems in the form of \eqref{sys} with impulses only depending on states at history moments rather than the states at the impulse times. This issue will be demonstrated with Example \ref{example2}. {  Though the discussion in this remark is also applicable to Theorem \ref{th1}, condition (iii) of Theorem \ref{th3} is not necessary for Theorem \ref{th1} (see Example \ref{example1} for illustration).}
\end{remark}


{ 
\begin{remark}\label{remark-coro}
Specify the class $\mathcal{K}_{\infty}$ functions in condition (i) of the above theorem: $\alpha_1(t)=w_1 t^p$, $\alpha_2(t)=w_2 t^p$, and $\alpha_3(t)=w_3 t^p$ with positive constants $w_1$, $w_2$, $w_3$, and $p$, then condition (iv) of Theorem \ref{th3} holds with $\kappa=\frac{w_3}{w_1}$. If we further assume that there is no external input to system (\ref{sys}) (i.e., $w(t)=0$), then Theorem \ref{th3} serves as an global exponential stability result for system (\ref{sys}) with zero input and can be applied to study the delay-dependent impulsive control problems of time-delay systems (see Example \ref{example2} in Section \ref{Sec4} for demonstrations). Our requirement on the upper bound of $\delta$ in condition (v) of Theorem \ref{th3} is less conservative than the one of $\delta<\frac{1}{\mu} \ln(\frac{1}{\rho_1+\rho_2+\kappa})$ in Theorem 3.1 of \cite{XL-KZ-WCX:2016}. 
\end{remark}
}

The last ISS result is for system \eqref{sys} with both stable continuous and discrete dynamics, which shows that system \eqref{sys} is ISS for arbitrary impulse sequences.
\begin{theorem}
\label{th4}
Assume that there exist $V_1\in \nu_0$, $V_2\in \nu^*_0$, functions $\alpha_1,\alpha_2,\alpha_3,\chi\in\mathcal{K}_{\infty}$ and constants $\mu\geq 0$, { $\kappa>0$}, $1>\rho_1\geq 0$ and $\rho_2\geq 0$, such that, for all $t\in \mathbb{R}^+$, $x\in\mathbb{R}^n$, $y\in\mathbb{R}^m$ and $\phi\in \mathcal{PC}$, conditions (i), (ii) and (iii) of Theorem \ref{th1} { and condition (iv) of Theorem \ref{th3}} are satisfied and $\rho_1+\rho_2+(1-\rho_1)\kappa< 1$. Then system (\ref{sys}) is uniformly ISS over $\ell_{\textrm{all}}$.
\end{theorem}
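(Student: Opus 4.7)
The plan is to adapt the Lyapunov-functional comparison from the proofs of Theorems \ref{th1}--\ref{th3}, with the key structural change that the strict contraction $\rho := \rho_1+\rho_2+(1-\rho_1)\kappa<1$ takes the place of the dwell-time hypothesis $\ln\rho<\mu\delta$. Because this factor is strictly less than one on its own, no lower bound on impulse intervals will enter the constant selection, and the argument works uniformly over $\ell_{\textrm{all}}$.

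First I would fix constants. Since the map $\lambda\mapsto \rho_1+[\rho_2+(1-\rho_1)\kappa]\,e^{\lambda r}$ is continuous and equals $\rho<1$ at $\lambda=0$, choose $\lambda>0$ with $\lambda\le\mu$ so that $\tilde\rho:=\rho_1+[\rho_2+(1-\rho_1)\kappa]\,e^{\lambda r}<1$; then pick $c>0$ large enough that $c\tilde\rho+1/\mu\le c$ and $-\mu+1/c+\lambda\le 0$. Setting $v(t):=V_1(t,x(t))+V_2(t,x_t)$, $\bar\chi(t):=\chi(\sup_{s\in[t_0,t]}\|w(s)\|)$, and
\begin{equation*}
u(t):=\alpha(\|\varphi\|_r)+c\,e^{\lambda(t-t_0)}\bar\chi(t)+\sum_{t_0<t_k\le t} e^{\lambda(t_k-t_0)}\chi(\|w(t_k^-)\|),
\end{equation*}
I would prove $v(t)e^{\lambda(t-t_0)}\le u(t)$ by induction on impulse intervals, following Theorem \ref{th1}. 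The continuous estimate $v(t)\le e^{-\mu(t-t_k)}v(t_k)+\bar\chi(t)/\mu$ from (ii) is unchanged, the base case on $[t_0,t_1)$ is immediate from the choice of $c$, and at an impulse $t_m$ I would combine condition (iii) with the sandwich $V_2\le\kappa\sup V_1$ from condition (iv) of Theorem \ref{th3}, exactly as in the proof of Theorem \ref{th2}, to get
\begin{equation*}
v(t_m)\le \rho_1 v(t_m^-)+[\rho_2+(1-\rho_1)\kappa]\sup_{s\in[-r,0]}v(t_m^-+s)+\chi(\|w(t_m^-)\|).
\end{equation*}
Multiplying by $e^{\lambda(t_m-t_0)}$ and using the inductive hypothesis together with the auxiliary bound $v(t_m^-+s)e^{\lambda(t_m+s-t_0)}\le e^{\lambda r}u(t_m^-)$ yields $v(t_m)e^{\lambda(t_m-t_0)}\le \tilde\rho\,u(t_m^-)+\chi(\|w(t_m^-)\|)e^{\lambda(t_m-t_0)}\le u(t_m)$, the last inequality now using $\tilde\rho<1$ (rather than $\tilde\rho e^{-\mu\delta}\le 1$ as in Theorem \ref{th1}). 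Extension to $(t_m,t_{m+1})$ is by the same Dini-derivative contradiction argument as in Theorem \ref{th1}.

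The main obstacle is the final conversion of $v(t)e^{\lambda(t-t_0)}\le u(t)$ into a genuine ISS estimate over $\ell_{\textrm{all}}$. Without a positive lower bound on $t_k-t_{k-1}$, the residual sum $\sum_{t_0<t_k\le t}e^{-\lambda(t-t_k)}\chi(\|w(t_k^-)\|)$ that arises after one divides by $e^{\lambda(t-t_0)}$ is not controlled by a geometric series in $e^{-\lambda\delta}$ as in Theorem \ref{th1}. To handle this I would re-apply the impulse recursion in its stronger form, namely $v(t_m)e^{\lambda(t_m-t_0)}\le \tilde\rho\,u(t_m^-)+\chi(\|w(t_m^-)\|)e^{\lambda(t_m-t_0)}$ with $\tilde\rho<1$, as a geometric recursion in $m$; iterating gives a bound of the form $v(t)\le e^{-\lambda(t-t_0)}\alpha(\|\varphi\|_r)+\bigl[c+1/(1-\tilde\rho)\bigr]\bar\chi(t)$ with constants independent of the impulse sequence, from which the required $\beta\in\mathcal{KL}$ and $\gamma\in\mathcal{K}_\infty$ follow via condition (i), as in the argument of \cite{JL-XL-WCX:2011}.
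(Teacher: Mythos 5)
Your argument for the case $\mu>0$ is essentially the paper's: the same induction on impulse intervals with the same comparison function $u(t)$, the same impulse-time estimate obtained by feeding condition (iv) of Theorem \ref{th3} (the bound $V_2\leq\kappa\sup V_1$) into condition (iii) as in Theorem \ref{th2}, and the same Dini-derivative contradiction on the open intervals, with the strict contraction $\tilde\rho<1$ replacing the dwell-time inequality. However, there is a genuine gap: the theorem allows $\mu\geq 0$, and your constant selection collapses when $\mu=0$. You require $\lambda>0$ with $\lambda\leq\mu$, the inequality $-\mu+1/c+\lambda\leq 0$ (which forces $\lambda<\mu$ strictly), and the quantity $1/\mu$ in the choice of $c$; none of these is available at $\mu=0$, and the Dini-derivative step $(-\mu+1/c+\lambda)v(t^*)e^{\lambda(t^*-t_0)}\leq 0$ cannot be made to hold. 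The paper treats $\mu=0$ separately: condition (ii) with $\mu=0$ implies condition (ii) of Theorem \ref{th3} for every $\varepsilon>0$, so Theorem \ref{th3} gives uniform ISS over $\ell_{\textrm{sup}}(\delta)$ with $\delta<\ln[\rho_1+\rho_2+(1-\rho_1)\kappa]/(-\varepsilon)$, and letting $\varepsilon\to 0$ yields ISS over $\ell_{\textrm{all}}$. You need to add some such argument (or an exponent-free comparison) to cover $\mu=0$.

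On the other hand, your final paragraph identifies a real issue that the paper's proof passes over silently: over $\ell_{\textrm{all}}$ the impulse times may accumulate, so the sum $\sum_{t_0<t_k\leq t}e^{-\lambda(t-t_k)}\chi(\|w(t_k^-)\|)$ is not majorized by a geometric series in $e^{-\lambda\delta}$ as it is in Theorem \ref{th1}. Your idea of exploiting the strict contraction $\tilde\rho<1$ at each impulse to sum the injected input terms geometrically is the right instinct, but as written the recursion is in terms of $u(t_m^-)$, which already contains the full accumulated sum, so iterating it does not directly produce the factor $1/(1-\tilde\rho)$; you would need to set up the recursion for $v(t_m)e^{\lambda(t_m-t_0)}$ against $\sup_{s\in[-r,0]}v(t_m^-+s)e^{\lambda(t_m^-+s-t_0)}$ and the running input bound $c\,e^{\lambda(t_m-t_0)}\bar\chi(t_m)$ directly, and then close the loop with a Razumikhin-type iteration. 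This refinement, once made precise, would strengthen the final ISS estimate beyond the ``standard arguments'' the paper invokes.
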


\begin{proof}
First, we consider the scenario of $\mu>0$. Since $\rho_1+\rho_2<1$, there exists a small enough constant $\lambda>0$ such that $\rho_1+[\rho_2 +(1-\rho_1)\kappa ]e^{\lambda r}\leq 1$ and $\lambda<\mu$. Then we can pick a large enough constant $c>0$ such that $-\mu+1/c+\lambda\leq 0$. By mathematical induction, we will prove \eqref{eqn-u} holds for all $t\geq t_0$. Similar to the proof of Theorem \ref{th1}, we can show that \eqref{eqn-u} is true on $[t_0,t_1)$. Then, suppose \eqref{eqn-u} holds on $[t_0,t_m)$ with $m\geq 1$. We will show \eqref{eqn-u} is true for $t\in [t_m,t_{m+1})$. When $t=t_m$, we can revise the estimate in \eqref{eqn-tm} as what we did in \eqref{eqn-tm-2} with $\hat{\chi}(t^-_m)$ replaced with $\chi(\|w(t^-_m)\|)$.
Identical to the discussion in the proof of Theorem \ref{th1}, we can show \eqref{eqn-tm} is true for $t\in(t_m,t_{ m+1})$ by contradiction. The rest of the proof are essentially the same as that of Theorem \ref{th1} and thus omitted.

{ If $\mu=0$, then condition (ii) of Theorem \ref{th3} is true, i.e., $\mathrm{D}^+ V(t,\phi)\leq \varepsilon V(t,\phi) + \chi(\|w(t)\|)$ for any $\varepsilon>0$. We can conclude from Theorem \ref{th3} that system (\ref{sys}) is uniformly ISS over $\ell_{\textrm{sup}}(\delta)$ with 
$$\delta<\frac{\ln[\rho_1+\rho_2+(1-\rho_1)\kappa]}{-\varepsilon}.$$
Let $\varepsilon\rightarrow 0$ in the above inequality and we can see that system (\ref{sys}) is uniformly ISS over $\ell_{\textrm{all}}$.}
\end{proof}

{ 
\begin{remark}
In the above theorems, Lyapunov candidate $V$ comprises a function $V_1$ and a functional $V_2$. Since $V_2$ is indifferent to the impulses, it is necessary to incorporate the function $V_1$ into the Lyapunov candidate to capture the impulse effects on it. Condition (iii) of Theorem \ref{th1} (or Theorem \ref{th3}) characterizes the effects of the state jumps on $V_1$. If $\rho_1<1$, then condition (iv) in Theorem \ref{th3} is necessary to quantify the growth or decay of the Lyapunov candidate $V$ at each impulse moment. Condition (ii) of Theorem \ref{th1} (or Theorem \ref{th3}) describes the unstable (or stable) continuous-time dynamics. Condition (iv) of Theorem \ref{th1} (or condition (v) of Theorem \ref{th3}) balances the continuous-time dynamics and the impulse effects to guarantee the convergence of $V$ in the ISS sense, and then the convergence of the state $x$ can be derived by using the positive definite and decrescent properties of $V_1$ and $V_2$ in condition (i). It is worth mentioning that system \eqref{sys} is in the general form of nonlinear impulsive time-delay systems and it has been studied in \cite{WHC-WXZ:2009,JL-XL-WCX:2011,XMS-WW:2012,SD-MK-AM-LN:2012} for the ISS properties. However, the results in \cite{WHC-WXZ:2009,JL-XL-WCX:2011,XMS-WW:2012} are generally applicable to systems with delay-independent impulses (e.g., $\Delta x(t_k)=I_k(t_k,x(t_k^-),w(t_k^-))$ for $k\in \mathbb{N}$). The reason is that the condition $V_1(t,\phi(0)+I_k(t,\phi,w(t^-)))\leq \rho_1 V_1(t^-,\phi(0))$ in \cite{WHC-WXZ:2009,JL-XL-WCX:2011,XMS-WW:2012} actually is only verifiable for non-delayed impulses. The Lyapunov-Krasovskii-type result obtained in \cite{SD-MK-AM-LN:2012} requires an explicit relationship between $V(t_k)$ and $V(t^-_k)$ which is difficult to quantify. Because the impulses can only affect the function part $V_1$ not the functional part $V_2$. We overcome this difficulty by analyzing the impulse effects on the function part and requiring certain comparison between $V_1$ and $V_2$ when it is necessary, so that the impulse effects on the whole Lyapunov candidate can be obtained.
\end{remark}
}

\section{Examples}\label{Sec4}

{ In this section, we present two examples with numerical simulations. To demonstrate the advantage of Theorem \ref{th1} over the existing results for time-delay systems with delay-independent impulse effects, the first example is slighted adopted from \cite{WHC-WXZ:2009,JL-XL-WCX:2011} with distributed time-delay considered in the impulses.}

\begin{example}\label{example1}
Consider the following time-delay system with distributed-delay dependent impulses:
\begin{subequations}\label{e1sys}
\begin{align}
\label{e1sys-a}\dot{x}(t)&=-\mathrm{sat}(x(t))+a \mathrm{sat}(x(t-\tau))+b \mathrm{sat}(w(t)), ~t\not=t_k,\\
\label{e1sys-b}\Delta x(t)&=\frac{1}{4}\mathrm{sat}(\int^t_{t-\tau}x(s)\mathrm{d}s)+\frac{1}{4}\mathrm{sat}(w(t^-)),~t=t_k,
\end{align}
\end{subequations}
where $a=0.2$, $b=0.1$, $\tau=1$, and $\mathrm{sat}(x)$ is the saturation function defined as $\mathrm{sat}(x)=\frac{1}{2}(|x+1|-|x-1|)$.
\end{example}

To study the ISS property of system \eqref{e1sys}, we choose Lyapunov functional $V(t,\phi)=V_1(t,\phi(0))+V_2(t,\phi)$ with

\begin{align*}
V_1(t,x) &=\left\{\begin{array}{ll}
x^2, & |x|\leq 1, \cr
e^{2(|x|-1)}, & |x|>1,
\end{array}\right.\cr
V_2(t,\phi)&=|a|\int^0_{\tau}\mathrm{sat}^2(\phi(s))\Big(\epsilon+1+\frac{\epsilon s}{\tau}\Big)\mathrm{d}s,
\end{align*}
where $\epsilon>0$. Clearly, condition (i) of Theorem \ref{th1} holds. Similar to the discussion in Example 4.1 of \cite{JL-XL-WCX:2011}, we have $\mathrm{D}^+V(t,\phi)\leq -\mu V(t,\phi)+ \frac{|b|^2}{2}w^2(t)$ with $\mu=\min\{2-(\epsilon+2)|a|-2|b|,\frac{\epsilon}{(\epsilon+1)\tau}\}$. Therefore, condition (ii) of Theorem \ref{th1} is satisfied. Next, we verify condition (iii) of Theorem \ref{th1} at each impulse time. To to this, we consider the following two scenarios:

If $|x(t_k)|\leq 1$, then $V_1(t,x(t_k)) = x^2(t_k) \leq 3x^2(t^-_k)+\frac{3}{16}\mathrm{sat}^2(\int^{t_k}_{t_k-\tau}x(s)\mathrm{d}s)+\frac{3}{16}\mathrm{sat}^2(w(t^-_k))$. Furthermore, if $|x(t^-_k)|\leq 1$, then $x^2(t^-_k)=V_1(t^-_k,x(t^-_k))$; if $|x(t^-_k)|> 1$, then $x^2(t^-_k)\leq e^{2(|x(t^-_k)|-1)}=V_1(t^-_k,x(t^-_k))$. Thus, $x^2(t^-_k)\leq V_1(t^-_k,x(t^-_k))$, and similarly, we can obtain $x^2(t^-_k+s)\leq V_1(t^-_k+s,x(t^-_k+s))$ for any $s\in[-\tau,0]$, which then implies $\mathrm{sat}(\int^{t_k}_{t_k-\tau}x(s)\mathrm{d}s)\leq (\int^{t_k}_{t_k-\tau}x(s)\mathrm{d}s)^2\leq \tau\int^{t_k}_{t_k-\tau}x^2(s)\mathrm{d}s\leq \tau\sup_{s\in[-\tau,0]}\{V_1(t^-_k+s,x(t^-_k+s))\}$. Based on the above discussion, we can have $V_1(t_k,x(t_k))\leq 3V_1(t^-_k,x(t^-_k))+\frac{3\tau}{16}\sup_{s\in[-\tau,0]}\{V_1(t^-_k+s,x(t^-_k+s))\} +\frac{3}{16}\mathrm{sat}^2(w(t^-_k))$.

If $|x(t_k)|> 1$, then
\begin{align*}
& V_1(t_k,x(t_k)) = e^{2(|x(t_k)|-1)}\cr
& \leq e^{2(|x(t^-_k)|+\frac{1}{4}|\mathrm{sat}(\int^{t_k}_{t_k-\tau}x(s)\mathrm{d}s)|+\frac{1}{4}|\mathrm{sat}(w(t^-_k))|-1)}\cr
& \leq e^{2|x(t^-_k)|-1}\cr
& \leq \left\{\begin{array}{ll}
e e^{2|x(t^-_k)|-2}=eV_1(t^-_k,x(t^-_k)), & |x(t^-_k)| > 1, \cr
2e |x(t^-_k)|^2=2eV_1(t^-_k,x(t^-_k)), & \frac{1}{2}< |x(t^-_k)|\leq 1,
\end{array}\right.
\end{align*}
where we used the fact that $e^{2x-2}\leq 2x^2$ for $x\in(\frac{1}{2},1]$.

In either of the scenario, we have shown that condition (iii) holds with $\rho_1=2e$ and $\rho_2=\frac{3\tau}{16}$. According to Theorem \ref{th1}, if $\delta>\ln(\rho_1+\rho_2 e^{\mu\tau})/\mu$, then system \eqref{e1sys} is uniformly ISS over $\ell_{\textrm{inf}}(\delta)$. For illustration, choosing $\epsilon=5$, we can compute $\mu=0.4$ and then $\ln(\rho_1+\rho_2 e^{\mu\tau})/\mu=2.06$, which gives the condition: $\delta>2.06$. Simulation results for system \eqref{e1sys} with the given parameters and $\delta=2.1$ are shown in Fig. \ref{fig1}.

\begin{figure}[!t]
\centering
\includegraphics[width=3.4in]{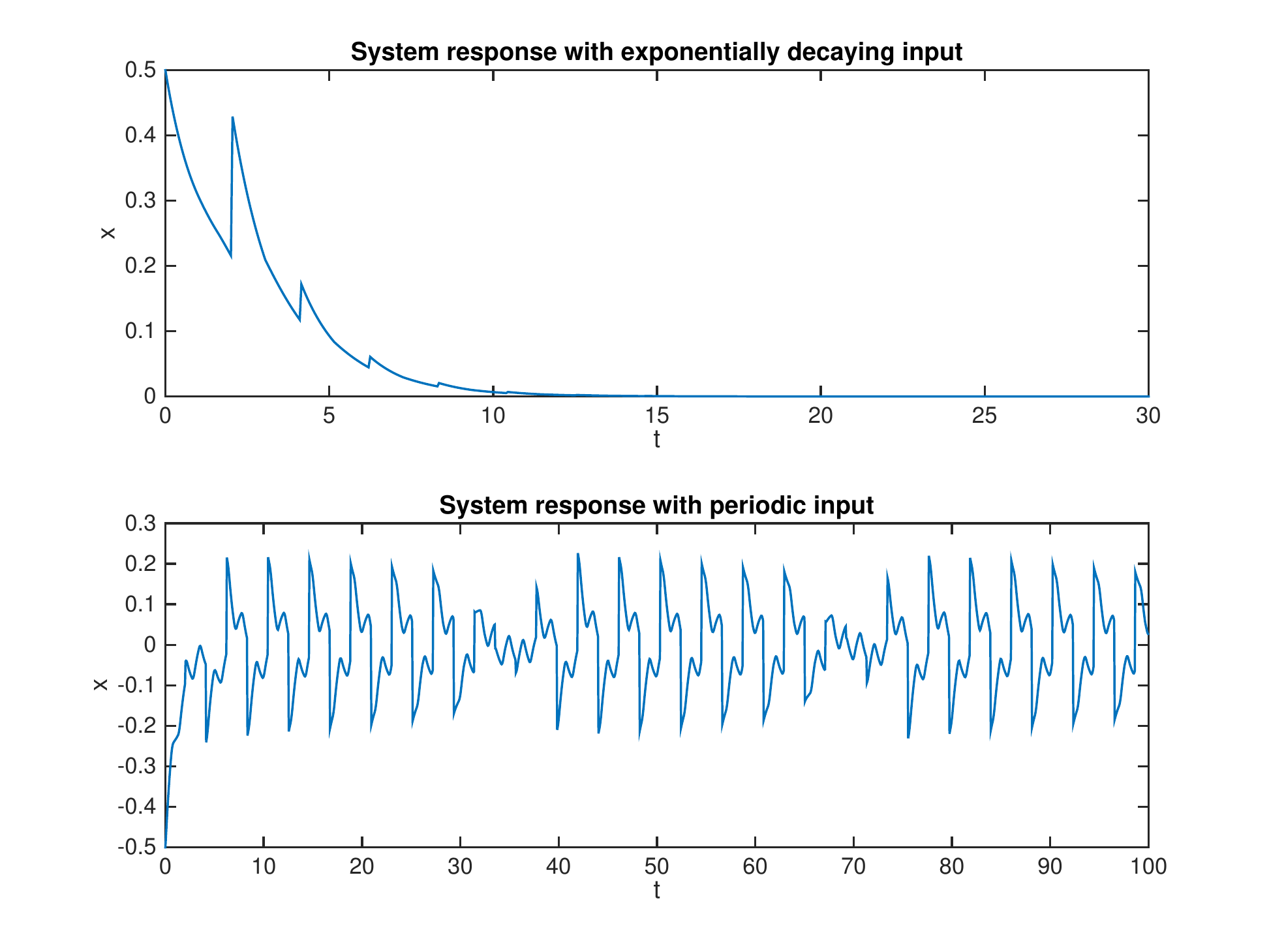}
\caption{Simulation results for system \eqref{e1sys} with exponentially decaying input $w(t)=5e^{-t}$ and periodic input $w(t)=2\sin(14\pi t)$.}
\label{fig1}
\end{figure}

\vskip3mm

{ The second example studies the effects of disturbance inputs on impulsive synchronization of time-delay systems. Applications of the analysis conducted in this example can be extended to impulsive synchronization and stabilization of complex dynamical networks with time-delay (see, e.g., \cite{XL-KZ-WCX:2016,JS-QLH-XJ:2008}).}

\begin{example}\label{example2}
Consider the following two time-delay systems:
\begin{eqnarray}\label{e2sys1}
\dot{x}(t)=Ax(t)+g(x(t-r)),
\end{eqnarray}
and
\begin{eqnarray}\label{e2sys2}
\left\{\begin{array}{ll}
\dot{y}(t)=Ay(t)+g(y(t-r))+Bw(t), & t\not=t_k,\cr
\Delta y(t)=C[y(t-d)-x(t-d)]+Dw(t^-), & t=t_k,
\end{array}\right.
\end{eqnarray}
where $x,y\in\mathbb{R}^n$, $A,C\in\mathbb{R}^{n\times n}$, and $B,D\in\mathbb{R}^{n\times m}$; $r$ and $d$ represent the delays in the continuous and discrete dynamics, respectively; $w(t)$ is the disturbance input; $f$ satisfies Lipschitz condition, that is, there exists a positive constant $L$ such that $\|g(x)-g(y)\|\leq L\|x-y\|$ for any $x,y\in\mathbb{R}^n$. Without loss of generality, we assume that $t_k-t_{k-1}=\delta$ for all $k\in \mathbb{N}$.
\end{example}

Denote the error state $e(t):=y(t)-x(t)$, the dynamics of which can be described by the following impulsive time-delay system:
\begin{eqnarray}\label{e2sys3}
\left\{\begin{array}{ll}
\dot{e}(t)=Ay(t)+\hat{g}(e(t-r))+Bw(t), & t\not=t_k,\cr
\Delta e(t)=Ce(t-d)+Dw(t^-), & t=t_k,
\end{array}\right.
\end{eqnarray}
where $\hat{g}(e(t-r))=g(y(t-r))-g(x(t-r))$. The objective of this example is to study the synchronization between systems \eqref{e2sys1} and \eqref{e2sys2} in the sense that $\lim_{t\rightarrow\infty} e(t)=0$, and the impact of the disturbance input $w(t)$ on this system synchronization, i.e., ISS properties of error system \eqref{e2sys3}. To do so, consider a Lyapunov functional candidate $V(t,e_t)=V_1(t,e(t))+V_2(t,e_t)$ with $V_1(t,e(t))=e^T(t)e(t)$ and $V_2(t,e_t)=\varepsilon L\int^t_{t-r}e^T(s)e(s)\mathrm{d}s$, { then both conditions (i) and (iv) of Theorem \ref{th3} are satisfied with the functions $\alpha_1,\alpha_2,\alpha_3$ specified in Remark \ref{remark-coro} and $p=2$, $w_1=w_2=1$, $w_3=\kappa=\varepsilon r L$}. For simplicity, we denote $v(t)=V(t,e_t)$, $v_1(t)=V_1(t,e(t))$, and $v_2(t)=V_2(t,e_t)$. Similar to the estimation in the proof of Theorem 4.1 from \cite{XL-KZ-WCX:2016}, we have
\begin{align*}
\dot{v}(t) \leq& \mu v(t) +\chi_{\epsilon_1}(\|w(t)\|), ~ t\not=t_k,\cr
v_1(t_k) \leq& \rho_1 v_1(t^-_k)+ \rho_2 \sup_{s\in[-\tau,0]}\{v_1(t^-_k+s)\} \cr
             & +\chi_{\epsilon_2}(\sup_{s\in[-\tau,0]}\|w(t^-_k+s)\|), ~ k\in \mathbb{N},\cr
\end{align*}
where $\mu=\lambda_{max}(A+A^T)+L(\varepsilon+\varepsilon^{-1})+\epsilon_1$, $\rho_1=(1+\xi)\|I+C\|^2$, and $\rho_2=(1+\xi^{-1})[\epsilon_2 d\|C\|(\|A\|+L) +\zeta \|C\|^2]^2$ with $\epsilon_1>0$, $\epsilon_2>1$, and $\xi>0$; $\zeta$ denotes the number of impulses on the interval $(t_k-d,t_k)$; $\chi_{\epsilon_1}$ and $\chi_{\epsilon_2}$ are $\mathcal{K}_{\infty}$ class functions which depend on $\epsilon_2$ and $\epsilon_2$, respectively. It can be seen that conditions (ii) and (iii) of Theorem \ref{th3} are satisfied. Minimizing $\rho_1+\rho_2+(1-\rho_1)\kappa$ with $\xi>0$ , we have $\min_{\xi>0}\{\rho_1+\rho_2+(1-\rho_1)\kappa\}=[\sqrt{1-\kappa}\|I+C\|+\epsilon_2 d \|C\|(\|A\|+L)+\zeta \|C\|^2]^2+\kappa$. If
\begin{eqnarray}\label{e2-1}
&\ln\{[\sqrt{1-\kappa}\|I+C\|+ d \|C\|(\|A\|+L)+\zeta \|C\|^2]^2+\kappa\}\cr
&< -[\lambda_{max}(A+A^T)+L(\varepsilon+\varepsilon^{-1})]\delta
\end{eqnarray}
holds, then there exists positive constants $\epsilon_1$ close to $0$ and $\epsilon_2>1$ close to $1$ such that condition (v) of Theorem \ref{th3} is satisfied. Therefore, the conclusion of Theorem \ref{th3} implies that error system \eqref{e2sys3} is uniformly ISS, provided \eqref{e2-1} holds. As a numerical example, we take $A=\begin{bmatrix}
    -18/7 & 9 & 0 \\
    1 & -1 & 1 \\
    0 & -100/7 & 0
\end{bmatrix}$, $g(x)=\mathrm{sat}(x_1)\begin{bmatrix}
    27/7  \\
    0 \\
    0
\end{bmatrix}$, $B=\begin{bmatrix}
    0  \\
    1/7 \\
    1/7
\end{bmatrix}$, $C=-0.2I$, $D=\begin{bmatrix}
    2/7  \\
    0 \\
    0
\end{bmatrix}$, and $r=2d=0.02$. It can be verified that \eqref{e2-1} is satisfied with $\varepsilon=1$, $L=27/7$, $\delta=0.01$, and $\zeta=0$. According to { Theorem \ref{th3}}, system \eqref{e2sys3} is uniformly ISS. With the given parameters, system \eqref{e2sys1} is a delayed Chua's circuit which exhibits chaotic behaviors as shown in \cite{JL-XL-WCX:2011}. Simulation results for error system \eqref{e2sys3} are shown in Fig. \ref{fig2}. Synchronization between systems \eqref{e2sys1} and \eqref{e2sys2} is demonstrated in Fig. \ref{fig2a} with $w(t)=0$, and the ISS properties of error system \eqref{e2sys3} are illustrated in Fig. \ref{fig2b} and \ref{fig2c} with an exponentially decaying input and a periodic input, respectively.

\begin{figure}[!t]
\centering
\subfigure[]{\label{fig2a}\includegraphics[width=3.4in]{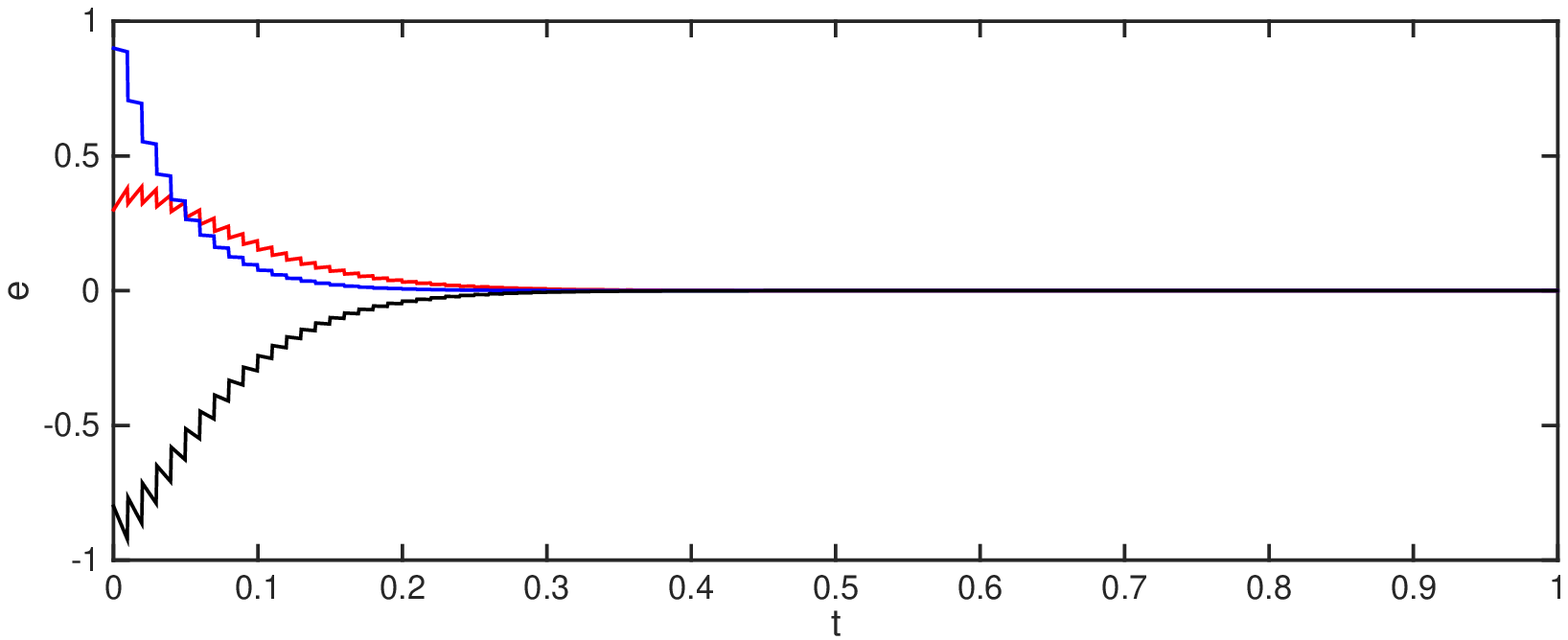}}
\subfigure[]{\label{fig2b}\includegraphics[width=3.4in]{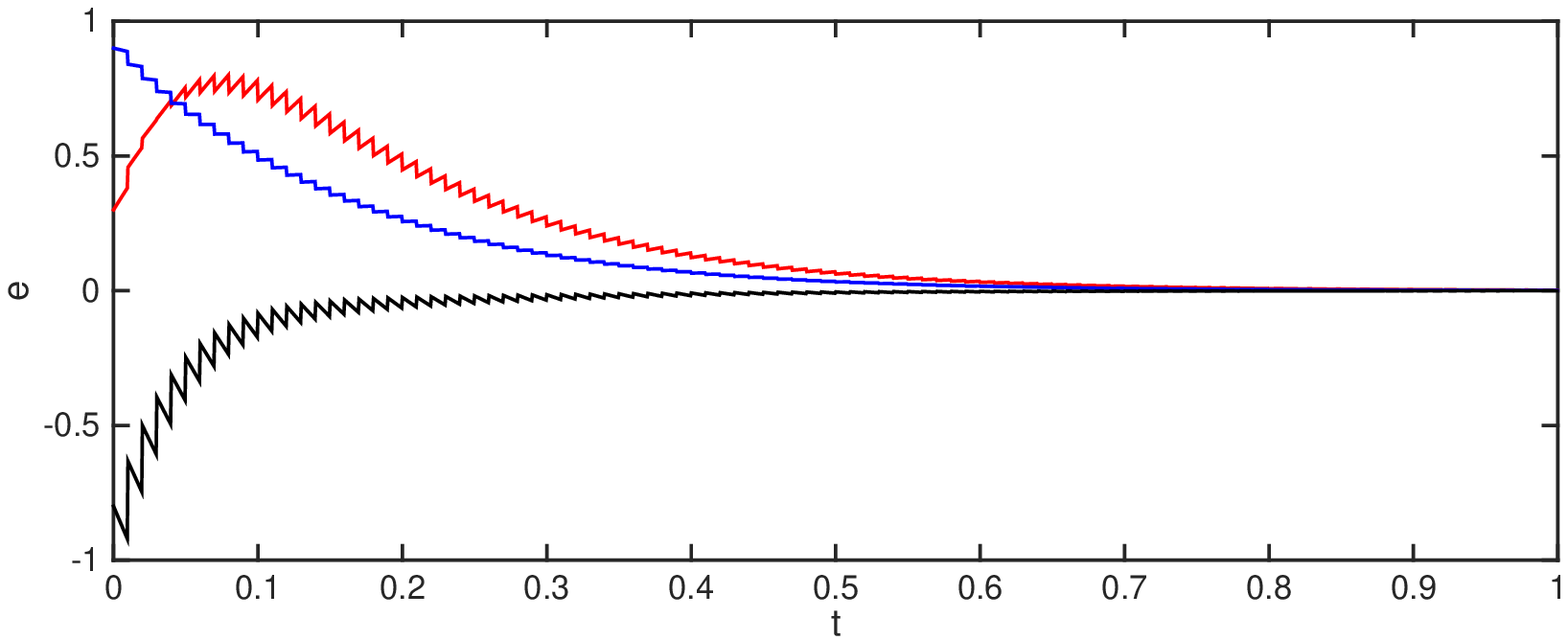}}
\subfigure[]{\label{fig2c}\includegraphics[width=3.4in]{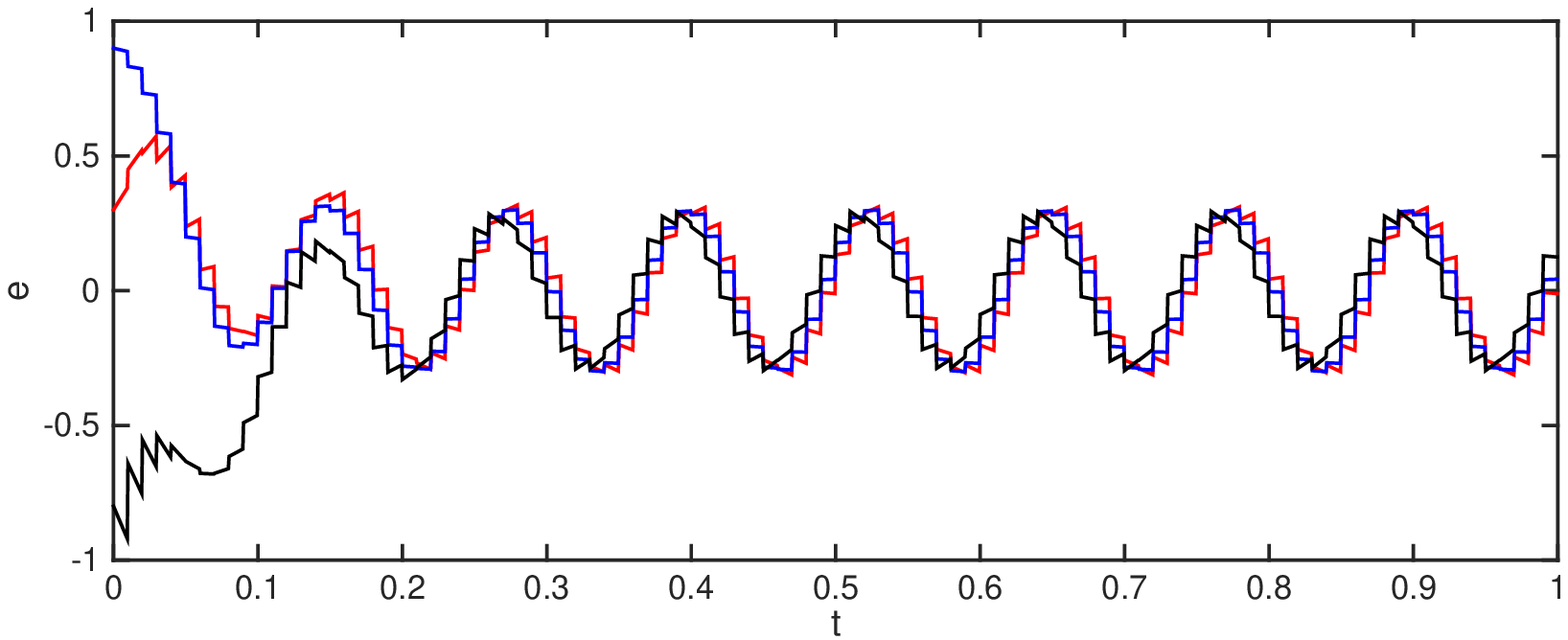}}
\caption{Simulation results for Example \ref{example2} with (a) zero input $w(t)=0$; (b) exponentially decaying input $w(t)=e^{-7t}$; (c) periodic input $w(t)=\cos(-16\pi t)$.}
\label{fig2}
\end{figure}

{ 
\begin{remark}
The derivation of $\rho_1$ and $\rho_2$ is based on the relationship between states $e(t^-_k)$ and $e(t_k-d)$ which is obtained by integrating both sides of \eqref{e2sys3} from $t_k-d$ to $t^-_k$ (see \cite{XL-KZ-WCX:2016} for more details). Therefore, the external input over the time interval $[t_k-d,t_k)$ is brought into the estimation of $V_1$ at the impulse time $t_k$ as discussed in Remark \ref{remark-th3-1}. On the one hand, the delay-dependent impulses synchronize the systems in the sense of ISS. This positive effects of the delayed state $e(t_k-d)$ in the impulses is captured through parameter $\rho_1$. On the other hand, the existence of time-delay in the impulses leads to smaller upper bound for $\delta$ (i.e., $\delta<\frac{-\ln[\rho_1+\rho_2+(1-\rho_1)\kappa]}{\mu}$ with positive $\rho_2$), which is a drawback of the delayed states quantified by parameter $\rho_2$. Actually, the positive effect of the delay-dependent impulses is balanced with this disadvantage in condition (v) of Theorem \ref{th3} so that the synchronization can be achieved.
\end{remark}
}

\section{Conclusions}\label{Sec5}
The method of Lyapunov functionals has been used to investigate ISS property of time-delay systems with delay-dependent impulses. Sufficient conditions have been constructed for systems with ISS continuous dynamics and destabilizing discrete dynamics, unstable continuous dynamics and ISS discrete dynamics, and stable continuous dynamics with zero control input and ISS discrete dynamics, respectively.

Extension of our results to hybrid systems with both switching and impulse effects can be conducted by using the method of multiple Lyapunov-Krasovskii functionals along the line of \cite{JL-XL-WCX:2011,XMS-WW:2012}, which is a topic for future research. Another topic is to investigate integral-input-to-state stability (iISS) of nonlinear systems with delay-dependent impulses. Weaker conditions on the Lyapunov functional candidates are expected since iISS is a relaxed form of ISS as discussed in \cite{WHC-WXZ:2009,JL-XL-WCX:2011,XMS-WW:2012}.
{ Improvements of the current results is also a direction of the future work. For example, less conservative results on dwell time $\delta$ might be available by following the line of \cite{SD-AM:2013}, and the results could be applicable to systems with a larger class of impulse sequences by using the average impulsive interval approach as discussed in \cite{SD-MK-AM-LN:2012,XW-YT-WZ:2016}}.


%

%
%
%
%
%

\ifCLASSOPTIONcaptionsoff
  \newpage
\fi

\end{document}